\theoremstyle{plain}
\newtheorem{theorem}{Theorem}
\theoremstyle{definition}
\newtheorem{definition}{Definition}
\newtheorem{remark}{Remark}
\def\d{\underbar{\em d}}
\title{Symmetric Bipartite Graphs and  Graphs with Loops} 
\author{Grant Cairns}
\author{Stacey Mendan}
\address{Department of Mathematics and Statistics, La Trobe University, Melbourne, Australia 3086}
\email{G.Cairns@latrobe.edu.au}
\email{spmendan@students.latrobe.edu.au}
\subjclass{Primary 05C07, Secondary 05E18}
\begin{document}

\maketitle

\begin{abstract}
We show that if the two parts of a finite bipartite graph have the same degree sequence, then there is a  bipartite graph, with the same degree sequences, which is symmetric, in that it has  an involutive graph automorphism that interchanges its two parts.
To prove this, we study the relationship between symmetric bipartite graphs and graphs with loops.
\end{abstract}

\section{Introduction}

We say that a finite sequence $\d$ of nonnegative  integers is \emph{bipartite graphic} if the pair $(\d,\d\,)$ can be realized as the degree sequences of the parts of a bipartite simple graph. For example, Figure \ref{Fex} gives two realizations of the sequence $(2,2,1,1)$. 
Notice that the realization on the left is symmetric, while the one on the right is not, where by \emph{symmetric} we use the following natural definition.

\begin{definition} We say that a bipartite graph $G$ is \emph{symmetric} if there is an involutive graph automorphism of $G$ that interchanges its two parts.
\end{definition}

\begin{figure}[h!]
\[
\hbox{\xymatrix{
*{\circ}\ar@{-}[d]\ar@{-}[dr]&*{\circ}\ar@{-}[d]\ar@{-}[dl]&*{\circ}\ar@{-}[d]&*{\circ}\ar@{-}[d]\\
*{\circ}&*{\circ}&*{\circ}&*{\circ}}}
\hskip1.5cm
\hskip1cm
\hbox{\xymatrix{
*{\circ}\ar@{-}[d]\ar@{-}[dr]&*{\circ}\ar@{-}[dr]&*{\circ}\ar@{-}[d]\ar@{-}[dr]&*{\circ}\ar@{-}[d]\\
*{\circ}&*{\circ}&*{\circ}&*{\circ}}}
\]
\caption{}\label{Fex}
\end{figure}

We will establish the following result.

\begin{theorem}\label{T:sym} If a sequence $\emph{\d}$ is bipartite graphic, then there is a realization of $\emph{\d}$ that is symmetric.
\end{theorem}

Our proof of Theorem \ref{T:sym} relies on an observation connecting symmetric bipartite graphs with graph-with-loops.

\begin{definition} By a \emph{graph-with-loops} we mean a graph, without multiple edges, in which there is at most one loop at each vertex.
\end{definition}

Given a symmetric bipartite graph $G$ with involution $\sigma$, the quotient graph $G/\sigma$ is clearly a graph-with-loops. To see how this process can be reversed, recall that for every simple graph $G$, there is a natural associated bipartite
simple  graph
$\hat G$ called the \emph{bipartite double-cover} of $G$. The graph  $\hat G$  is the \emph{tensor product} $G\times K_2$ of $G$ with the connected graph $K_2$ with 2 vertices;  the vertex set of $G\times K_2$ is the Cartesian product of the vertices of $G$ and $K_2$, there are edges in $G\times K_2$ between $(a,0)$ and $(b,1)$ and between  $(a,1)$ and $(b,0)$ if and only if there is an edge in $G$ between $a$ and $b$; see \cite{HIK}. By construction, $\hat G$ is bipartite and symmetric; the automorphism is the map $(a,x)\mapsto (a,1-x)$. When $G$ is a graph-with-loops, the above construction again produces a symmetric bipartite graph; each loop in $G$ produces just one edge in  $G\times K_2$. Figure \ref{Fhex} shows the construction for the complete graph-with-loops  $G$ on three vertices. 

\begin{figure}
\[
\hbox{\xymatrix@=.9em{
\\
&&*{\circ}
\ar@{-}[ddll]\ar@{-}[ddrr]\ar@{-}@(ul,ur)\\
\\
*{\circ}\ar@{-}@(l,d)\ar@{-}[rrrr]&&&&*{\circ}\ar@{-}@(r,d)\\
}}
\hskip.5cm
\hbox{\xymatrix{
&\\
}}
\hskip1cm
\hbox{\xymatrix@=.9em{
&&*{\circ}\ar@{-}[dddd]
\ar@{-}[dll]\ar@{-}[drr]\\
*{\circ}\ar@{-}[dd]\ar@{-}[ddrrrr]&&&&*{\circ}\ar@{-}[dd]\ar@{-}[ddllll]\\
\\
*{\circ}\ar@{-}[drr]&&&&*{\circ}\ar@{-}[dll]\\
&&*{\circ}
}}
\]
\caption{The complete graph-with-loops  on three vertices, and its bipartite double-cover.}\label{Fhex}
\end{figure}
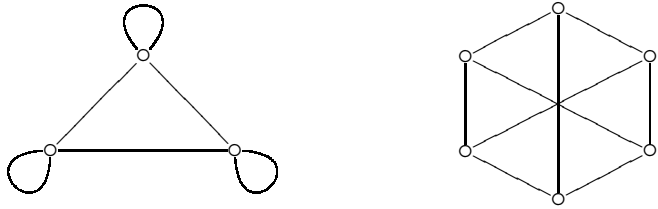

Note that graphs-with-loops are a special family of multigraphs \cite{Diestel} and that for multigraphs, the \emph{degree} of a vertex is usually taken to be the number of edges incident to the vertex, with loops counted twice. For our purposes, a different definition of degree is more appropriate. We introduce the following definition.

\begin{definition} For a graph-with-loops,
 the \emph{reduced degree} of a vertex is  taken to be the number of edges incident to the vertex, with loops counted once. 
\end{definition}

So, for example, in the complete graph-with-loops  $G$ on three vertices, shown on the left in Figure \ref{Fhex}, the vertices each have reduced degree three. The vertices in the tensor product $\hat G=G\times K_2$ also have the same degrees as the reduced degrees of the corresponding vertices of $G$. In general, if $\d$ is the sequence of  reduced degrees of the vertices of a graph-with-loops  $G$, then  $\hat G$  is a symmetric bipartite graph whose parts have degree sequences $(\d,\d\,)$. 

We will employ the following Erd\H{o}s--Gallai type result; the proof is given in the final section.

\begin{theorem}\label{T:reduced} Let $\emph{\d}=(d_1 ,\dots, d_n )$ be a sequence  of nonnegative integers in decreasing order. Then
 $\emph{\d}$ is the sequence of  reduced degrees of the vertices of a graph-with-loops if and only if for each integer $k$ with $1 \leq  k \leq  n$,
\begin{equation}\label{E:loops}
\sum_{i=1}^k d_i\leq  k^2+ \sum_{i=k+1}^n\min\{k,d_i\} .  
\end{equation}
\end{theorem}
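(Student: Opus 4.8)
The plan is to prove the two implications separately: the easy direction (necessity) by a direct counting argument, the hard direction (sufficiency) by a Havel--Hakimi type reduction, and then to record the promised equivalence with the bipartite (Gale--Ryser) condition.

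For necessity, suppose \d\ is the reduced degree sequence of a graph-with-loops $G$, and fix $k$. I would bound $\sum_{i=1}^k d_i$ by classifying the contributions of edges and loops to the reduced degrees of the $k$ vertices $v_1,\dots,v_k$ of largest degree. An edge joining two of these $k$ vertices contributes $2$, a loop at one of them contributes $1$, and an edge from one of them to a vertex $v_j$ with $j>k$ contributes $1$; edges and loops disjoint from $\{v_1,\dots,v_k\}$ contribute $0$. There are at most $\binom{k}{2}$ internal edges and at most $k$ internal loops, giving at most $2\binom{k}{2}+k=k^2$, and each outside vertex $v_j$ sends at most $\min\{k,d_j\}$ edges into the block. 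Summing yields exactly \eqref{E:loops}. The key point, and the reason no parity hypothesis appears, is that loops are counted once, so the internal contribution is $k(k-1)+k=k^2$ rather than the $k(k-1)$ of the Erd\H{o}s--Gallai Theorem.

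For sufficiency I would argue by induction on $\sum_i d_i$ using a loop-sensitive Havel--Hakimi step. Given \d\ satisfying \eqref{E:loops} with $d_1\ge 1$, the case $k=1$ of \eqref{E:loops} guarantees that the number of indices $j\ge 2$ with $d_j\ge 1$ is at least $d_1-1$. If it is at least $d_1$, I join vertex $1$ to the $d_1$ largest of the remaining degrees (no loop); if it is exactly $d_1-1$, I place a loop at vertex $1$ and join it to all $d_1-1$ positive remaining degrees. In either case I delete vertex $1$, re-sort, and recurse. The heart of the proof is the monotonicity lemma asserting that the reduced sequence again satisfies \eqref{E:loops}. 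This is the standard but delicate part of any Havel--Hakimi argument, and it is exactly here that the extra freedom provided by the optional loop is used: the loop lets the construction absorb the parity defect that obstructs the purely simple-graph case, which is why \eqref{E:loops}, unlike the Erd\H{o}s--Gallai condition, requires no evenness hypothesis.

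Finally, to connect with bipartite graphs as promised, I would show that \eqref{E:loops} holds for all $k$ if and only if the Gale--Ryser condition $\sum_{i=1}^k d_i\le\sum_{j=1}^n\min\{k,d_j\}$ holds for all $k$, that is, that \d\ is the common degree sequence of both parts of a bipartite graph. One direction is immediate since $\sum_{j=1}^k\min\{k,d_j\}\le k^2$. For the converse, fixing $k$ and setting $m=\#\{i:d_i\ge k\}$, the case $m\ge k$ is immediate because then $\sum_{j=1}^k\min\{k,d_j\}=k^2$; when $m<k$ the inequality reduces after cancellation to $\sum_{i=1}^m d_i\le mk+\sum_{j>k}d_j$, which follows by applying \eqref{E:loops} at the smaller index $m$ and bounding $\sum_{j=m+1}^k\min\{m,d_j\}\le (k-m)m$. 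I expect the genuine obstacle to be the monotonicity lemma in the sufficiency step; the alternative of realizing the Gale--Ryser bipartite graph and then symmetrizing its adjacency matrix into a graph-with-loops is conceptually appealing but merely pushes the difficulty into a switching argument that is no easier.
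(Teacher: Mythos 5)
Your necessity argument is correct and is essentially the paper's: the block $S$ of the $k$ largest-degree vertices receives at most $k(k-1)$ degree contributions from edges inside $S$ plus $k$ from loops inside $S$ (counted once), i.e. $k^2$ in total, plus at most $\min\{k,d_j\}$ from each vertex $v_j$ outside $S$. Your closing paragraph, showing that \eqref{E:loops} holds for all $k$ if and only if the Gale--Ryser condition holds for all $k$, is also mathematically correct, but it is not part of this theorem (it is in effect the content of the paper's Corollary \ref{T:ddd}), and, as you yourself concede, it cannot substitute for the sufficiency proof without a symmetrization (switching) argument that you do not supply.

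The genuine gap is in sufficiency. Your plan is a Havel--Hakimi step: join $v_1$ to the $d_1$ largest remaining positive degrees (placing a loop at $v_1$ exactly when only $d_1-1$ of them are positive), delete $v_1$, re-sort, and recurse. Granting your ``monotonicity lemma'' --- that the residual sorted sequence again satisfies \eqref{E:loops} --- the construction does complete, since a realization of the residual together with the restored vertex and its edges realizes $\d$. But you never prove that lemma; you only assert that it is ``the standard but delicate part.'' It is not standard here: the usual Havel--Hakimi justification is a swap argument that presupposes $\d$ is realizable, which is precisely what is being proved, so the lemma must instead be established by a direct inequality computation, carried out separately for your loop and no-loop branches, after \emph{all} $d_1$ of the largest remaining entries have been decremented. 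That computation is the entire mathematical content of the hard direction, and it is harder than anything in your write-up. The paper avoids exactly this difficulty by using Choudum's much gentler reduction: decrement only $d_1$ and $d_n$ by one, prove by a case analysis on $k$ (relative to the plateau $d_1=\dots=d_m$, with a separate contradiction argument when equality holds in \eqref{E:loops}) that the re-sorted sequence $\d\,''$ still satisfies \eqref{E:loops}, and then convert a realization of $\d\,''$ into one of $\d$ by explicit edge and loop exchanges. Incidentally, your appeal to ``absorbing the parity defect'' is a red herring: \eqref{E:loops} carries no evenness hypothesis, so in the inductive step there is no parity to absorb; the loop freedom enters through the constant $k^2$ and through the exchange moves, not through parity. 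To repair your proof, either prove your monotonicity lemma in full for both branches, or switch to the single-decrement reduction, where the bookkeeping is tractable.
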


 Theorem \ref{T:reduced} has the following application.

\begin{theorem}\label{T:ddd} A  sequence  $\emph{\d}=(d_1 ,\dots, d_n )$ of nonnegative integers in decreasing order is the sequence of  reduced degrees of the vertices of a graph-with-loops if and only if $\emph{\d}$ is bipartite graphic.
\end{theorem}

\begin{proof} If $\d$ is the sequence of  reduced degrees of the vertices of a graph-with-loops $G$, then the bipartite double-cover $\hat G$ of $G$ has parts with degree sequences $(\d,\d\,)$.
Conversely, if $\d$ is bipartite graphic, then by the Gale--Ryser Theorem \cite{Gale,Ryser}, for each $k$ with $1\leq k\leq n$,
\[\sum_{i=1}^k d_i
\leq   \sum_{i=1}^n\min\{k,d_i\}
\leq   k^2+\sum_{i=k+1}^n\min\{k,d_i\},
\]
and so by Theorem \ref{T:reduced}, $\d$ is the sequence of  reduced degrees of the vertices of a graph-with-loops.\end{proof}

We can now prove our main result.

\begin{proof}[Proof of Theorem \ref{T:sym}]
If $\d$ is  bipartite graphic, then by Theorem \ref{T:ddd}, $\d$ is the sequence of  reduced degrees of the vertices of a graph-with-loops $G$. Then the bipartite double-cover  $\hat G$ of $G$ is symmetric and its parts have degree sequences $(\d,\d)$.\end{proof}

\section{Some Remarks}

\begin{remark} From Theorem  \ref{T:reduced} and Theorem \ref{T:ddd}, condition \eqref{E:loops}  gives an Erd\H{o}s--Gallai type condition for a sequence to be bipartite graphic, which is analogous to the Gale--Ryser condition.  \end{remark}

\begin{remark} It is clear from the discussion in Section 1 that if a sequence $(d_1 ,\dots, d_n )$ is graphic, then by adding a loop at each vertex, $(d_1+1 ,d_2+1,\dots, d_n +1)$ is the sequence of  reduced degrees of the vertices of a graph-with-loops, and so by Theorem \ref{T:ddd}, the sequence $(d_1+1 ,d_2+1,\dots, d_n +1)$ is bipartite graphic. Note that the converse is not true; for example, $(4,4,2,2)$ is bipartite graphic, while $(3,3,1,1)$ is not graphic. 
\end{remark}

\begin{remark}
There are several results in the literature of the following kind: if $\d$ is graphic, and if $\d\,'$ is obtained from $\d$ using a particular construction, then $\d\,'$ is also graphic. 
The Kleitman--Wang Theorem is of this kind \cite{KW}. Another useful result is implicit in Choudum's proof \cite{C} of the Erd\H{o}s--Gallai Theorem: If a decreasing sequence $\d=(d_1 ,\dots, d_n )$ of positive integers  is graphic, then so is the sequence $\d\,'$ obtained by reducing both $d_1$ and $d_n$ by one.
Analogously, our proof of Theorem \ref{T:reduced}, which is modelled on Choudum's proof, also establishes the following result:
 If a decreasing sequence $\d=(d_1 ,\dots, d_n )$  of positive integers is bipartite graphic,  then so is the sequence $\d\,'$ obtained by reducing both $d_1$ and $d_n$ by one.
\end{remark}

\begin{remark}
For criteria for sequences to be realized by multigraphs, see \cite{MV}. There are many  other recent papers on graphic sequences, see for example \cite{TV,TT,Yin,YL,BHJW,CSZZ,CSNZZ1,CSNZZ2}.
\end{remark}

\section{Proof of Theorem \ref{T:reduced}}\label{S:EGS}

The following  proof mimics Choudum's proof of the Erd\H{o}s--Gallai Theorem  \cite{C}.

 For the proof of necessity, consider the set $S$ comprised of the first $k$ vertices. The left hand side of \eqref{E:loops} is the number of half-edges incident to $S$, with each loop counting as  one. On the right hand side, $k^2$ is the number of half-edges in the complete graph-with-loops on $S$, again with each loop counting as  one, while $ \sum_{i=k+1}^n\min\{k,d_i\}$ is the maximum number of edges that could join vertices in $S$ to vertices outside $S$. So \eqref{E:loops} is obvious.

Conversely, suppose that $\d=(d_1 ,\dots, d_n )$ verifies \eqref{E:loops} and consider the sequence $\d\,'$ obtained by reducing both $d_1$ and $d_n$ by 1.  Let $\d\,''$ denote the sequence obtained by reordering $\d\,'$ so as to be decreasing. 
Suppose that $\d\,''$ satisfies \eqref{E:loops} and hence by the inductive hypothesis, there is a graph-with-loops $G'$ that realizes $\d\,'$. We will show how $\d$ can be realized. Let the  vertices of $G'$ be labelled $v_1,\dots,v_n$.  If there is no edge in $G'$ connecting $v_1$ to $v_n$, then add one; this gives a graph-with-loops $G$  that realizes $\d$.  Similarly, if there is no loop at either $v_1$ or $v_n$,  just add loops at both $v_1$ and $v_n$. So it remains to treat the case where there is an edge in $G'$ connecting  $v_1$ to $v_n$, and at least one of the vertices $v_1,v_n$ has a loop.

Now, for the moment, let us assume there is a loop  in $G'$ at $v_1$. Applying the hypothesis to $\d$, using $k=1$ gives 
\[
d_1\leq  1+\sum_{i=2}^n\min\{1,d_i\}\leq n,
\]
and so $d_1-2< n-1$. Now in $G'$, the degree of $v_1$ is $d_1-1$ and so apart from the loop at $v_1$, there are a further $d_1-2$ edges incident to $v_1$. So in $G'$, there is some vertex $v_i\not= v_1$, for which there is no edge from $v_1$ to $v_i$. 
Note that $d'_i>d'_n$. 
If there is a loop  in $G'$ at $v_n$, or if there is no loop at $v_i$ nor at $v_n$, then there is a vertex $v_j$ such that there is an edge in $G'$ from $v_i$ to $v_j$, but there is no edge from $v_j$ to $v_n$.
Now remove the edge $v_iv_j$, and put in edges from $v_1$ to $v_i$, and from $v_j$ to $v_n$, as in Figure \ref{F1e}. This 
gives a graph-with-loops $G$ that realizes $\d$.
If there is  no loop  in $G'$ at  $v_n$, but there is a loop at $v_i$,  remove the loop at $v_i$, add the edge $v_1v_i$ and add a loop at $v_n$, as in Figure \ref{F2e}.
 
Finally, assume there is no loop  in $G'$ at $v_1$, but there is a loop in $G'$ at $v_n$. So, apart from the loop, there are a further $d_n-2$ edges incident to $v_n$. Since $d_1\geq d_n$, we have $d_1-1> d_n-2$, and so there is a  vertex $v_i$ such that there is an edge in $G'$ from $v_1$ to $v_i$, but there is no edge from $v_i$ to $v_n$.
Note that $d'_i>d'_n$, so as there is a loop  in $G'$ at $v_n$,  there is a vertex $v_j$ such that there is an edge in $G'$ from $v_i$ to $v_j$, but there is no edge from $v_j$ to $v_n$. Now remove the  loop at $v_n$ and the edge $v_iv_j$, and put edges $v_jv_n$ and $v_iv_n$ and add a loop at $v_1$, as in Figure \ref{F3e}. This 
gives a graph-with-loops $G$ that realizes $\d$.

\begin{figure}
\[
\hbox{\xymatrix{
&&*{\circ}\save[]+<0.2cm,0.2cm>*{v_i}\restore
\ar@{..}[dll]\ar@{-}[dd]&&\\
*{\circ}\save[]+<-0.4cm,0.2cm>*{v_1}\restore\ar@{-}[rrrr]\ar@{-}@(ul,ur)&&&&*{\circ}\save[]+<0.2cm,0.2cm>*{v_n}\restore\\
&&*{\circ}\save[]+<0.3cm,-0.2cm>*{v_j}\restore \ar@{..}[urr]&&}}
\hskip.5cm
\hbox{\xymatrix{
&\\
&\to\\
&}}
\hskip1cm
\hbox{\xymatrix{
&&*{\circ}\save[]+<0.2cm,0.2cm>*{v_i}\restore  \ar@{-}[dll]  \ar@{..}[dd]&&\\
*{\circ}\save[]+<-0.4cm,0.2cm>*{v_1}\restore\ar@{-}[rrrr]\ar@{-}@(ul,ur)&&&&*{\circ}\save[]+<0.2cm,0.2cm>*{v_n}\restore\\
&&*{\circ}\save[]+<0.3cm,-0.2cm>*{v_j}\restore \ar@{-}[urr]&&}}
\]
\caption{}\label{F1e}
\end{figure}

\begin{figure}
\[
\hbox{\xymatrix{
&&*{\circ}\save[]+<-0.1cm,-0.3cm>*{v_i}\restore 
\ar@{..}[dll]\ar@{-}@(ul,ur)\\
*{\circ}\save[]+<-0.4cm,0.2cm>*{v_1}\restore\ar@{-}@(ul,ur)\ar@{-}[rrrr]&&&&*{\circ}\save[]+<0.5cm,0.2cm>*{v_n}\restore\ar@{..}@(ul,ur)\\
}}
\hskip.5cm
\hbox{\xymatrix{
&\\
&\to}}
\hskip1cm
\hbox{\xymatrix{
&&*{\circ}\save[]+<-0.1cm,-0.3cm>*{v_i}\restore 
\ar@{-}[dll]\ar@{..}@(ul,ur)\\
*{\circ}\save[]+<-0.4cm,0.2cm>*{v_1}\restore\ar@{-}@(ul,ur)\ar@{-}[rrrr]&&&&*{\circ}\save[]+<0.5cm,0.2cm>*{v_n}\restore\ar@{-}@(ul,ur)\\
}}
\]
\caption{}\label{F2e}
\end{figure}
\begin{figure}
\[
\hbox{\xymatrix{
&&*{\circ}\save[]+<0.2cm,0.2cm>*{v_i}\restore
\ar@{-}[dll]\ar@{..}[drr]\ar@{-}[dd]&&\\
*{\circ}\save[]+<-0.4cm,0.2cm>*{v_1}\restore\ar@{-}[rrrr]\ar@{..}@(ul,ur)&&&&*{\circ}\save[]+<0.5cm,0.2cm>*{v_n}\restore\restore\ar@{-}@(ul,ur)\\
&&*{\circ}\save[]+<0.3cm,-0.2cm>*{v_j}\restore \ar@{..}[urr]&&}}
\hskip.5cm
\hbox{\xymatrix{
&\\
&\to\\
&}}
\hskip1cm
\hbox{\xymatrix{
&&*{\circ}\save[]+<0.2cm,0.2cm>*{v_i}\restore  \ar@{-}[dll]\ar@{-}[drr]  \ar@{..}[dd]&&\\
*{\circ}\save[]+<-0.4cm,0.2cm>*{v_1}\restore\ar@{-}[rrrr]\ar@{-}@(ul,ur)&&&&*{\circ}\save[]+<0.5cm,0.2cm>*{v_n}\restore\restore\ar@{..}@(ul,ur)\\
&&*{\circ}\save[]+<0.3cm,-0.2cm>*{v_j}\restore \ar@{-}[urr]&&}}
\]
\caption{}\label{F3e}
\end{figure}

It remains to show  that $\d\,''$ satisfies \eqref{E:loops}.  Define $m$ as follows: if the $d_i$ are all equal, put $m=n-1$, otherwise, define $m$ by the condition that $d_1=\dots=d_m$ and $d_m>d_{m+1}$. We have
$d''_i=d_i$ for all $i\not=m,n$, while $d''_m=d_m-1$ and $d''_n=d_n-1$. Consider condition \eqref{E:loops} for $\d\,''$:
\begin{equation}\label{E:loopdd}
\sum_{i=1}^k d''_i\leq  k^2+ \sum_{i=k+1}^n\min\{k,d''_i\} .  
\end{equation}
For $m\leq k<n$, we have
$\sum_{i=1}^k d''_i=\sum_{i=1}^k d_i-1$, while   $\sum_{i=k+1}^n\min\{k,d''_i\} \geq   \sum_{i=k+1}^n\min\{k,d_i\}-1$, and so \eqref{E:loopdd} holds.  For $k=n$,
$\sum_{i=1}^k d''_i=\sum_{i=1}^k d_i-2< k^2$, and so \eqref{E:loopdd} again holds. For $k<m$, first note that if $d_k\leq k$, then $\sum_{i=1}^k d''_i=\sum_{i=1}^k d_i\leq k^2\leq k^2+ \sum_{i=k+1}^n\min\{k,d''_i\}$.
So it remains to deal with the case where $k<m$ and $d_k> k$.  We have
\begin{equation*}
\sum_{i=1}^k d''_i=\sum_{i=1}^k d_i\leq  k^2+ \sum_{i=k+1}^n\min\{k,d_i\} .
\end{equation*}
Notice that as $d_i=d''_i$ except for $i=m,n$, we have  $\min\{k,d''_i\}=\min\{k,d_i\}$ except possibly for $i=m,n$. In fact, as $k<m$, we have  $d_m= d_k>k$ and $d''_m=d_m-1\geq k$ and so
$\min\{k,d_m\}=k=\min\{k,d''_m\}$. Hence $\sum_{i=k+1}^n\min\{k,d''_i\}\geq\sum_{i=k+1}^n\min\{k,d_i\}-1$. Thus, in order to establish  \eqref{E:loopdd}, it suffices to show that $\sum_{i=1}^k d_i<  k^2+ \sum_{i=k+1}^n\min\{k,d_i\}$. Suppose instead that $\sum_{i=1}^k d_i=  k^2+ \sum_{i=k+1}^n\min\{k,d_i\}$. We have 
 \[
 k d_m=\sum_{i=1}^k d_i=  k^2+ \sum_{i=k+1}^n\min\{k,d_i\}  
\]
and so
 \[
  d_m= k+\frac1k\sum_{i=k+1}^n\min\{k,d_i\} .  
\]
 Then
 \[
 \sum_{i=1}^{k +1}d_i =(k+1)d_m=k(k+1)+\frac{k+1}k\sum_{i=k+1}^n\min\{k,d_i\} .  
\]
We have $d_{k+1}=d_m>k$ and so
$\min\{k,d_{k+1}\}=k$. Note that $\sum_{i=k+2}^n\min\{k,d_i\}\not=0$ as $k+2\leq n$, since $k<m\leq n-1$. So 
 \[
 \sum_{i=1}^{k +1}d_i =k(k+1)+(k+1)+\frac{k+1}k\sum_{i=k+2}^n\min\{k,d_i\} >(k+1)^2+\sum_{i=k+2}^n\min\{k,d_i\},  
\]
contradicting \eqref{E:loops}. Hence $\d\,''$ satisfies \eqref{E:loopdd}, as claimed. This  completes the proof.
\qed

\emph{Acknowledgements}. This study began in 2010 during evening seminars on the mathematics of the internet conducted by the \emph{Q-Society}. The first author would like to thank the Q-Society members for their involvement, and particularly Marcel Jackson for his thought provoking questions. We also thank Yuri Nikolayevsky, whose comments improved the presentation of the paper.

\bibliographystyle{amsplain}
\bibliography{erdosgallai}

  \end{document}